\definecolor{ashgrey}{rgb}{0.7, 0.75, 0.71}
\definecolor{oxfordblue}{rgb}{0.0, 0.13, 0.28}
\definecolor{armygreen}{rgb}{0.29, 0.33, 0.13}
\definecolor{bulgarianrose}{rgb}{0.28, 0.02, 0.03}
\definecolor{carnelian}{rgb}{0.7, 0.11, 0.11}
\definecolor{lapislazuli}{rgb}{0.15, 0.38, 0.61}
\definecolor{mediumelectricblue}{rgb}{0.01, 0.31, 0.59}
\newtheorem{thm}{Theorem}[section]
\newtheorem{cor}[thm]{Corollary}
\newtheorem{lem}[thm]{Lemma}
\newtheorem{prop}[thm]{Proposition}
\newtheorem{conj}[thm]{Conjecture}
\theoremstyle{definition}
\newtheorem{dfn}[thm]{Definition}
\theoremstyle{remark}
\numberwithin{equation}{subsection}
\newtheorem{Q}[thm]{Question}
\newtheorem{con}[thm]{Conventions}
\newcommand{\al}{\alpha}
\newcommand{\ka}{\kappa}
\newcommand{\w}{\omega}
\newcommand{\CC}{\mathcal{C}}
\newcommand{\MM}{\overline{\mathcal{M}}}
\newcommand{\CG}{\mathcal{G}}
\newcommand{\CF}{\mathcal{F}}
\newcommand{\CM}{\mathcal{M}}
\newcommand{\E}{\mathbb{E}}
\newcommand{\G}{\mathcal{G}}
\newcommand{\QQ}{\mathbb{Q}}
\newcommand{\D}{\qquad}
\newcommand{\too}{\rightarrow}
\begin{document}

\title[The connection between $R^*(\CC_g^n)$ and $R^*(\CM_{g,n}^{rt})$]
{The connection between $R^*(\CC_g^n)$ and $R^*(\CM_{g,n}^{rt})$}
\author[M. Tavakol]{Mehdi Tavakol}
   \address{IBS Center for Geometry and Physics}
   \email{mehdi@ibs.re.kr}

\begin{abstract}
We study the connection between tautological classes on the moduli spaces $\CC_g^n$ and $\CM_{g,n}^{rt}$.
A conjectural connection between tautological relations on $\CC_g^n$ and $\CM_{g,n}^{rt}$ is described.
The analogue of this conjecture for the Gorenstein quotients of tautological rings is proved.
\end{abstract}   

\maketitle

\section*{Introduction}
Let $\CM_{g,n}^{rt}$ be the moduli space of stable $n$-pointed curves of genus $g>1$ with rational tails.
This space is a partial compactification of the space $\CM_{g,n}$, 
classifying smooth curves of genus $g$ together with $n$ ordered distinct points. 
It sits inside the Deligne-Mumford compactification $\MM_{g,n}$ of $\CM_{g,n}$, which classifies all stable $n$-pointed curves of genus $g$.
We also consider the space $\CC_g^n$ classifying smooth curves of genus $g$ with not necessarily distinct $n$ ordered points.
There is a natural proper map from $\CM_{g,n}^{rt}$ to $\CC_g^n$ which contracts all rational components.
Tautological classes on these spaces are natural algebraic cycles reflecting the geometry of curves.
Their definition and analysis on $\CM_g$ and $\MM_g$ was started by Mumford in the seminal paper \cite{M}.
These were later generalized to pointed spaces by Faber and Pandharipande \cite{FP2}. 
In this short note we study the connection between tautological classes on $\CC_g^n$ and $\CM_{g,n}^{rt}$.
We show that there is a natural filtration on the tautological ring of $\CM_{g,n}^{rt}$ consisting of $g-2+n$ steps. 
A conjectural dictionary between tautological relations on $\CC_g^n$ and $\CM_{g,n}^{rt}$ is presented.
We prove the analogue version of our conjecture for the Gorenstein quotients of tautological rings.
It is well-known that the tautological group of $\CM_{g,n}^{rt}$ is one dimensional in degree $g-2+n$ and it vanishes in higher degrees.
In \cite{GV2} the authors predicted that these should be deductible from the main theorem of Looijenga \cite{L}.
We show that both statements follow from the result of Looijenga on $\CC_g^n$ and a result of Faber. 

\begin{con}
We consider algebraic cycles modulo rational equivalence.
All Chow groups are taken with $\QQ$-coefficients. 
\end{con}

\noindent{\bf Acknowledgments.}
This note was prepared during my stay at 
KdV Instituut voor Wiskunde in the
research group of Sergey Shadrin.
I was supported by the research grant IBS-R003-S1.
Discussions with Carel Faber and Qizheng Yin inspired this study.  
I am grateful to Pierre Deligne for his useful remarks on the preliminary version of this note. 

\section{Tautological algebras}

\subsection{The moduli spaces}

Let $\pi: \CC_g \too \CM_g$ be the universal smooth curve of genus $g>1$.
For an integer $n>0$ the $n$-fold fiber product of $\CC_g$ over $\CM_g$ is denoted by $\CC_g^n$.
It parameterizes smooth curves of genus $g$ together with $n$ ordered points.
We also consider the space $\CM_{g,n}^{rt}$ which classifies stable $n$-pointed curves of genus $g$ with rational tails.
This moduli space classifies stable curves of arithmetic genus $g$ consisting of a unique component of genus $g$.
These conditions imply that all other components are rational.
The marked points belong to the smooth locus of the curve.
Marked points and nodal points are called special.
By the stability condition we require that every rational component has at least 3 special points.
As a result the corresponding moduli point has finitely many 
automorphisms and the associated stack is of Deligne-Mumford type.
Recall that the boundary $\CM_{g,n}^{rt} \setminus \CM_{g,n}$ is a divisor with normal crossings.
It is a union of irreducible divisor classes $D_I$ parameterized by subsets $I$ of the set $\{1, \dots , n\}$
having at least 2 elements. 
The generic point of the divisor $D_I$ corresponds to a nodal curve with 2 components.
One of its components is of genus $g$ and the other component is rational.
The set $I$ refers to the markings on the rational component.

\subsection{Tautological classes}

Tautological classes are natural algebraic cycles on the moduli space.
Their original definition on the space $\CM_g$ and its compactification 
$\MM_g$ is given by Mumford in \cite{M}.
The definition of tautological algebras of moduli spaces of 
pointed curves is due to Faber and Pandharipande \cite{FP2}. 
We will recall the definition below.

\begin{dfn}
Let $g,n$ be integers so that $2g-2+n>0$.
The system of tautological rings is defined to be the set of smallest 
$\QQ$-subalgebras $R^*(\MM_{g,n})$ of the Chow rings $A^*(\MM_{g,n})$ 
which is closed under push-forward via all maps forgetting points and all gluing maps. 
\end{dfn}

\begin{dfn}
The tautological ring $R^*(\CM_{g,n}^{rt})$ of $\CM_{g,n}^{rt}$ is the image of $R^*(\MM_{g,n})$ 
via the restriction map.
\end{dfn}

Recall that there is a proper map $p: \CM_{g,n}^{rt} \too \CC_g^n$ 
which contracts all rational components of the curve.

\begin{dfn}
The tautological ring $R^*(\CC_g^n)$ of $\CC_g^n$ is defined as the image of $R^*(\CM_{g,n}^{rt})$ 
via the push-forward map $p_*:A^*(\CM_{g,n}^{rt}) \too A^*(\CC_g^n)$.
\end{dfn}

There is a more explicit presentation of the tautological ring of $\CC_g^n$ and $\CM_{g,n}^{rt}$.
Let $\pi: \CC_g \too \CM_g$ be the universal curve of genus $g$ as before.
Denote by $\w$ its relative dualizing sheaf.
Its class in the Chow group $A^1(\CC_g)$ of $\CC_g$ is denoted by $K$.
This defines the class $K_i$ for every $1 \leq i \leq n$ on $\CC_g^n$.
We also denote by $d_{i,j}$ the class of the diagonal for $1 \leq i<j \leq n$.
The kappa class $\ka_i$ is the push-forward class $\pi_*(K^{i+1})$.
According to the vanishing results proven by Looijenga \cite{L} the class $\ka_i$ is zero when $i>g-2$.
We denote the pull-back of these classes to $\CM_{g,n}^{rt}$ along the contraction map $p$ by the same letters. 
The proof of the following fact is left to the reader:

\begin{lem}
(1) The tautological ring of $\CC_g^n$ is the 
$\QQ$-subalgebra of its Chow ring $A^*(\CC_g^n)$ 
generated by kappa classes $\ka_i$ for $1 \leq i \leq g-2$ together with the classes 
$K_i$ and $d_{i,j}$ for $1 \leq i < j \leq n$.

(2) The tautological ring of $\CM_{g,n}^{rt}$ is 
the $\QQ$-subalgebra of $A^*(\CM_{g,n}^{rt})$ 
generated by $\ka_i$ for $1 \leq i \leq g-2$, the divisor classes $K_i, d_{i,j}$ and $D_I$  
for $1 \leq i < j \leq n$ and subsets $I$ of the set $\{1, \dots, n\}$ having at least 3 elements. 
\end{lem}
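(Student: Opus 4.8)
The plan is to deduce both presentations from the standard additive description of the full tautological ring $R^*(\MM_{g,n})$ and then to analyze the two operations that define our rings: restriction to the open substack $\CM_{g,n}^{rt}\subset\MM_{g,n}$, and push-forward along the contraction $c\colon\CM_{g,n}^{rt}\too\CC_g^n$.

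I would treat part (2) first. Recall that $R^*(\MM_{g,n})$ is spanned by the decorated stratum classes $\xi_{\Gamma*}(\te)$, where $\Gamma$ runs over stable graphs, $\xi_\Gamma$ is the associated gluing map, and $\te$ is a monomial in $\psi$- and $\ka$-classes on the vertices. The locus $\CM_{g,n}^{rt}$ is open in $\MM_{g,n}$, its complement being the closed locus of curves with no genus-$g$ component; hence restriction is pullback along an open immersion. Since component genera cannot increase under specialization, a rational-tails curve can only degenerate from a graph that already has a genus-$g$ vertex, so the restriction of $\xi_{\Gamma*}(\te)$ vanishes unless $\Gamma$ consists of a single genus-$g$ vertex with trees of rational vertices attached. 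For such $\Gamma$ the image of $\xi_\Gamma$ meets $\CM_{g,n}^{rt}$ in a component of an intersection of the divisors $D_I$, and it remains to rewrite the restricted decorated class as a polynomial in $D_I$, $K_i$ and $\ka_i$. The $\ka$- and $\psi$-decorations on the genus-$g$ vertex restrict to $\ka_i$ and, via the comparison relation expressing $\psi_i-K_i$ as a $\QQ$-combination of the divisors $D_I$ with $i\in I$, to $K_i$; higher kappa classes drop out by Looijenga's vanishing $\ka_i=0$ for $i>g-2$. The $\psi$-decorations on the rational vertices lie in $R^*(\MM_{0,m})$, which Keel shows is generated by its boundary divisors, and these correspond to further divisors $D_{I'}$ with $I'\subset I$. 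Identifying $d_{i,j}$ with $D_{\{i,j\}}$, this gives the inclusion $\subseteq$. The reverse inclusion is immediate, since $\ka_i$, $K_i$ (hence $\psi_i$), $d_{i,j}$ and the $D_I$ are themselves restrictions of tautological classes on $\MM_{g,n}$.

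For part (1) I would push the presentation of part (2) forward along $c$, which is proper and birational, being an isomorphism over the dense locus $\CM_{g,n}$ of distinct points. As $\ka_i$, $K_i$ and $d_{i,j}$ are pulled back from $\CC_g^n$, the projection formula extracts them from any monomial, reducing the task to computing $c_*$ of a monomial in the $D_I$. A single $D_I$ with $|I|\ge 3$ maps onto the small diagonal $\Delta_I\subset\CC_g^n$, of codimension $|I|-1\ge 2$, so $c_*D_I=0$ for dimension reasons, whereas $c_*D_{\{i,j\}}=d_{i,j}$. The same dimension bookkeeping, together with the fact that the classes of the small diagonals $\Delta_I$ are polynomials in the $d_{i,j}$ and $K_i$, shows that $c_*$ of any boundary monomial lands in the subalgebra generated by $\ka_i$, $K_i$ and $d_{i,j}$. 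Hence $R^*(\CC_g^n)=c_*R^*(\CM_{g,n}^{rt})$ is contained in that subalgebra, and the reverse inclusion follows from $c_*c^*=\mathrm{id}$ (for instance $c_*(c^*\ka_i)=\ka_i$, as $c$ has degree one).

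The main obstacle is the inclusion $\subseteq$ in part (2). The delicate point is that the $\psi$-classes attached to the nodes of $\Gamma$ are not restrictions of ambient classes; they must be eliminated using Keel's presentation on the rational vertices and the $\psi$--$K$ comparison on the genus-$g$ vertex, so that no node-cotangent class survives outside the algebra generated by the $D_I$, $K_i$ and $\ka_i$. Once this bookkeeping is carried out, both presentations follow formally.
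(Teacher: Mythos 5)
The paper gives no proof of this lemma at all --- it is explicitly ``left to the reader'' --- so there is no argument of the author's to compare against, and your proposal has to stand on its own. The route you choose (additive generation of $R^*(\MM_{g,n})$ by decorated strata classes, vanishing upon restriction of every stratum without a genus-$g$ vertex, Keel's presentation of $A^*(\MM_{0,m})$ for the rational vertices, the comparison $\psi_i=K_i+\sum_{i\in I}D_I$, and projection formula plus birationality for the contraction $c\colon\CM_{g,n}^{rt}\too\CC_g^n$) is surely the intended one, and most of the individual steps are correct.

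There is, however, a genuine gap at the decisive step of part (1). You assert that ``dimension bookkeeping, together with the fact that the classes of the small diagonals $\Delta_I$ are polynomials in the $d_{i,j}$ and $K_i$,'' shows that $c_*$ of any monomial in the $D_I$ lies in the subalgebra generated by $\ka_i$, $K_i$, $d_{i,j}$. That inference is not valid. Support considerations only give $c_*\bigl(\prod_r D_{I_r}^{a_r}\bigr)=\iota_*\beta$ for some class $\beta$ on the relevant intersection of small diagonals $\iota\colon\Delta\into\CC_g^n$; as soon as $\sum_r a_r$ exceeds the codimension of $\Delta$ (already for $c_*(D_I^{k})$ with $k>|I|-1$) the class $\beta$ has positive degree, and nothing in the dimension count forces $\beta$ to be tautological --- the tautologicality of $[\Delta]$ itself is irrelevant to that of $\beta$, and the Chow ring of $\Delta\cong\CC_g^{n-|I|+1}$ is in general far larger than its tautological subring. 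To close this you must actually compute the pushforward: use the product structure $D_I\cong \CM_{g,\,n-|I|+1}^{rt}\times\MM_{0,\,|I|+1}$, the normal bundle formula $c_1\bigl(\OO(D_I)|_{D_I}\bigr)=-\psi'-\psi''$ (cotangent classes at the two branches of the node), and the identity $K_i|_{D_I}=\psi'$ for $i\in I$; expanding $(-\psi'-\psi'')^{k-1}$ and integrating the $\psi''$-powers over the genus-zero fibers exhibits $c_*(D_I^{k})$ as $\Delta_{I*}$ of an explicit polynomial in $K$-classes, hence as a polynomial in the $K_i$ and $d_{i,j}$ by the projection formula; mixed monomials follow by induction on the nested family of subsets. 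A smaller instance of the same omission sits in part (2): the comparison $\psi_i-K_i=\sum_{i\in I}D_I$ disposes only of $\psi$-classes at the markings, while the cotangent classes at node half-edges on the genus-$g$ vertex require precisely the identity $K_i|_{D_I}=\psi'$ just mentioned; you correctly flag this as ``the main obstacle,'' but you never supply the ingredient that resolves it.
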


The Hodge bundle $\E$ on the moduli space also defines tautological classes.
Recall that the bundle $\E$ on $\CM_g$ is the locally free sheaf of rank $g$ whose fiber over a moduli point 
$[C] \in \CM_g$ is the space of holomorphic differentials on $C$.
Chern classes of the Hodge bundle are also natural cycles.
The Grothendieck-Riemann-Roch computation by Mumford \cite{M} shows that 
lambda classes are expressed in terms of kappa classes and belong to the tautological ring.
For a fixed curve $C$ the connection between tautological classes on the Fulton-MacPherson compactification of $C^n$ 
and the space of curves with rational tails are explained in our previous works \cite{T1,T2,T3}. 
Based on the same picture we can see the connection between tautological classes on $\CM_{g,n}^{rt}$ and $\CC_g^n$.
The idea is to view $\CM_{g,n}^{rt}$ as the relative Fulton-MacPherson 
compactification of the space $\CC_g^n$ over $\CM_g$ for $g \geq 2$. 
In genus one we need to consider the base $\CM_{1,1}$ instead.
To get a uniform description we assume that $g>1$ below.
From this picture one can think of the divisor classes 
$D_I$ for $|I| \geq 3$ as exceptional divisors which appear in the process of blow-ups.
Notice that the pull-back homomorphism identifies the Chow ring of $\CC_g^n$ with a subalgebra of
$A^*(\CM_{g,n}^{rt})$.
We use this identification and view $R^*(\CC_g^n)$ as a subalgebra of $R^*(\CM_{g,n}^{rt})$. 
The following statement follows from the discussion above:

\begin{prop}\label{FM}
The tautological ring $R^*(\CM_{g,n}^{rt})$ of $\CM_{g,n}^{rt}$ is an algebra over $R^*(\CC_g^n)$ generated by the 
divisor classes $D_I$ for subsets $I$ of $\{1, \dots , n\}$ with at least 3 elements. 
\end{prop}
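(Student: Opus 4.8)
The plan is to read off the statement directly from the explicit list of generators supplied by the preceding lemma. First I would recall that, by part (2) of that lemma, $R^*(\CM_{g,n}^{rt})$ is generated as a $\QQ$-algebra by the classes $\ka_i$ (for $1 \leq i \leq g-2$), the divisor classes $K_i$ and $d_{i,j}$ (for $1 \leq i < j \leq n$), and the boundary classes $D_I$ (for $|I| \geq 3$). By part (1) of the same lemma, the subring $R^*(\CC_g^n)$ is generated by precisely the first three families $\ka_i$, $K_i$, $d_{i,j}$. Since, by the convention fixed earlier, the symbols $\ka_i$, $K_i$, $d_{i,j}$ on $\CM_{g,n}^{rt}$ denote the pull-backs of the corresponding generators of $R^*(\CC_g^n)$ along the contraction morphism $c \colon \CM_{g,n}^{rt} \too \CC_g^n$, these three families already lie in the image of $R^*(\CC_g^n)$. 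The only remaining generators of $R^*(\CM_{g,n}^{rt})$ are therefore the $D_I$, which is exactly the assertion.

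For this conclusion to carry meaning I must first confirm that $R^*(\CC_g^n)$ is genuinely a subalgebra of $R^*(\CM_{g,n}^{rt})$, i.e.\ that $c^* \colon A^*(\CC_g^n) \too A^*(\CM_{g,n}^{rt})$ is injective. This is the one point carrying real content. The contraction $c$ is proper and birational, being an isomorphism over the open locus where the $n$ marked points are pairwise distinct, so $c_*[\CM_{g,n}^{rt}] = [\CC_g^n]$. The projection formula then gives $c_* c^* \al = \al \cdot c_*[\CM_{g,n}^{rt}] = \al$ for every $\al \in A^*(\CC_g^n)$, whence $c_* \circ c^* = \mathrm{id}$ and $c^*$ is injective. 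This realizes $A^*(\CC_g^n)$, and in particular its tautological subring $R^*(\CC_g^n)$, as a subalgebra of $A^*(\CM_{g,n}^{rt})$, as claimed in the discussion preceding the statement. Here one uses that both spaces are smooth Deligne--Mumford stacks and that we work with $\QQ$-coefficients, so the projection formula for proper maps applies.

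The main obstacle is not in the algebra; once the generators are split into those pulled back from the base and the exceptional divisors $D_I$, the statement is immediate. Rather, it lies in justifying the geometric input behind the preceding lemma, namely the identification of $\CM_{g,n}^{rt}$ with the relative Fulton--MacPherson compactification of $\CC_g^n$ over $\CM_g$ and of the $D_I$ with its exceptional divisors. Granting that identification, established in the cited works on the Fulton--MacPherson picture, the list of generators is complete and no further care is needed: the pulled-back classes generate $R^*(\CC_g^n)$ and the $D_I$ generate the extension $R^*(\CM_{g,n}^{rt})$ over it, so the proposition follows at once.
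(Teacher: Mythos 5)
Your proof is correct and takes essentially the same route as the paper: the paper derives this proposition directly from the preceding lemma's explicit generators together with the identification of $R^*(\CC_g^n)$ as a subalgebra of $R^*(\CM_{g,n}^{rt})$ via pull-back along the contraction, which is exactly your argument. Your justification of the injectivity of the pull-back (properness, birationality, and the projection formula giving $c_* \circ c^* = \mathrm{id}$) fills in a point the paper merely asserts, but it is the standard argument the paper has in mind rather than a different approach.
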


Notice that we do not need to involve the divisor classes $D_I$ for subsets $I$ with 2 elements.
This is because the pull-back of the divisor $d_{i,j}$ via the natural map $\CM_{g,n}^{rt} \too \CC_g^n$ is the sum $\sum_{i,j \in I} D_I$.
We will call a divisor $D_I$ for a subset $I$ with $|I| \geq 3$ an \emph{exceptional divisor}. 

\subsection{Tautological groups in top degrees and the vanishings}\label{socle}

According to the result of Looijenga \cite{L} the tautological group of $\CC_g^n$ is at most one dimensional in degree $g-2+n$ and it vanishes in higher degrees.
The non-vanishing of $\ka_{g-2}$ on $\CM_g$ by Faber \cite{F2} established the one dimensionality.
Similar statements are true for the moduli space $\CM_{g,n}^{rt}$.
They are proved by Faber and Pandharipande \cite{FP2} and Graber and Vakil \cite{GV2}.
In both cases the localization formula for the virtual class is used to reduce the question to the result of Looijenga and the one dimensionality of $R^{g-2}(\CM_g)$.

\subsection{Intersection pairings}

The one dimensionality of the tautological groups of $\CC_g^n$ and $\CM_{g,n}^{rt}$ in top degrees enables us to define 
a pairing on tautological classes.
It is possible to relate intersection matrices of the pairings for $\CC_g^n$ and $\CM_{g,n}^{rt}$.
This connects the Gorenstein quotients of tautological algebras and leads to a conjectural dictionary between tautological relations on these spaces.
Here we give a brief description of the method and refer to \cite{T3} for the details. 
However we warn the reader that some of definitions are slightly different.
We introduce a collection of monomials which additively generate tautological groups of $\CM_{g,n}^{rt}$.
These elements are called \emph{standard monomials}.
To define them we need to associate a directed graph $\G$ to any non-zero monomial 
\begin{equation}\tag{1}\label{vec}
v:=a(v) \cdot D(v)
\end{equation}
which is a product of a class $a(v)$ in $R^*(\CC_g^n)$ 
with a product $$D(v):=\prod_{r=1}^m D_{I_r}^{i_r}$$ 
such that $i_r>0$ for $1 \leq r \leq m$.

\begin{dfn}
The directed graph $\G$ associated to the monomial $v$ in \eqref{vec} consists of a collection of vertices and edges.
Vertices of the graph $\G$ correspond to the subsets $I$ when $D_I$ is a factor of $D$. 
There is an edge from a vertex $I$ to a vertex $J$ when $J$ is a proper subset of $I$ and it is maximal with this property. 
Every minimal vertex is called a \emph{root} of $\G$.
\end{dfn}

Notice that the graph $\G$ only depends on the monomial $D$.
It is straightforward to see that there is no loop in the resulting graph.
The graph $\G$ can be disconnected in general.
The degree of a vertex $I$ is the number of outgoing edges.
To every non-zero monomial 
$v \in R^*(\CM_{g,n}^{rt})$ as in \eqref{vec} 
we associate a subset $S$ of the set 
$\{1, \dots , n\}$ as follows:
Consider the associated graph $\G$.
Denote by $J_1, \dots, J_s$ the set of roots of $\G$.
Let $\al_r \in J_r$ be the smallest element.
We define the set $S$ as follows:
\begin{equation}\tag{2}\label{set}
S:=\{\al_1, \dots , \al_s\} \cup (\cap_{r=1}^m I_r^c).
\end{equation}

\begin{dfn}\label{st}
Let $v$ be a non-zero monomial as in \eqref{vec} and $\G$ be its associated graph.
We say that $v$ is standard if 
$a(v) \in R^*(\CC_g^{S})$ and 
for every vertex $I_r$ of $\G$ the following inequality holds:
$$i_r \leq |I_r|-|\cup_{I_s \subset I_r} I_s|+\deg(I_r)-2.$$
\end{dfn}

We define a total preorder on the collection standard monomials as follows:

\begin{dfn}\label{order}
Let $I,J$ be two subsets of the index set $\{1, \dots , n\}$. 
We say that $I<J$ when $|J|<|I|$ or when $|I|=|J|$ and $I \neq J$.
This induces a preorder on the set of exceptional divisors.
We say that $D_I<D_J$ when $I<J$.
This induces a preorder on the set of all monomials using 
Lexicographic order.
\end{dfn}

\begin{prop}\label{three}
Tautological groups of $\CM_{g,n}^{rt}$ are additively generated by standard monomials. 
\end{prop}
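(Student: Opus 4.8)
The plan is to start from the generating set supplied by Proposition~\ref{FM}: every tautological class on $\CM_{g,n}^{rt}$ is an $R^*(\CC_g^n)$-linear combination of monomials $v=a(v)\cdot\prod_{r}D_{I_r}^{i_r}$, so it suffices to rewrite each such monomial as a sum of standard monomials. I would argue by descending induction on a complexity of the divisorial part $D(v)$, using three families of relations: (i) the vanishing of products of ``crossing'' exceptional divisors, (ii) the self-intersection relations coming from the projective-bundle structure of the blow-ups, and (iii) the restriction relations that rewrite the coefficient $a(v)$ using fewer points. Each relation replaces $v$ by a combination of strictly simpler monomials, so the induction terminates precisely at the standard monomials of Definition~\ref{st}.

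The first step makes the support $\{I_r\}$ laminar. In the relative Fulton--MacPherson picture, if $I$ and $J$ cross, meaning $I\cap J\neq\emptyset$ while neither contains the other, the two bubbling conditions are incompatible, so $D_I\cap D_J=\emptyset$ and $D_I\cdot D_J=0$. Hence any monomial whose index set is not a forest vanishes, and we may assume $\{I_r\}$ is laminar; this is exactly the case in which the graph $\G$, its roots $J_1,\dots,J_s$, and the set $S$ of \eqref{set} are well defined. I would also record here that $d_{i,j}$ pulls back to $\sum_{i,j\in I}D_I$, which is the link allowing diagonal classes to be traded against exceptional divisors in step (iii).

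Next I would bound the exponents and reduce the coefficient. Each $D_{I_r}$ is the exceptional divisor of the blow-up along the relative diagonal where the points of $I_r$ collide, and its restriction $D_{I_r}|_{D_{I_r}}$ is identified with a tautological class on the moduli of the rational bubble that is created. The dimension of this bubble equals the right-hand side of the inequality in Definition~\ref{st}: it is $\M_{0,|I_r|+1}$ of dimension $|I_r|-2$ when $I_r$ is a root, and in general the bubble carries $|I_r|-|\cup_{I_s\subset I_r}I_s|$ free markings, one node to its parent and $\deg(I_r)$ nodes to its children, giving dimension $|I_r|-|\cup_{I_s\subset I_r}I_s|+\deg(I_r)-2$. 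The projective-bundle relation on this $\M_{0,\bullet}$-fiber expresses any power of $D_{I_r}|_{D_{I_r}}$ beyond this dimension in terms of lower powers with coefficients pulled back from $\CC_g^n$ and from the descendant bubbles; multiplying back by $D_{I_r}$ turns each relation into one lowering $i_r$ at the cost of enlarging $a(v)$ and possibly raising exponents on descendant vertices, both controlled by the induction. Simultaneously, the restriction of the generators $K_i$ and $d_{i,j}$ to $D_{I_r}$ becomes, for indices lying in a common cluster, a class pulled back from the representative point $\al_r$ together with bubble classes; this is what lets $a(v)$ be rewritten inside $R^*(\CC_g^{S})$ modulo terms with strictly simpler divisorial part.

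The hard part will be choosing the complexity measure so that all three operations decrease it simultaneously, since lowering one exponent $i_r$ may enlarge $a(v)$ or push exponents onto descendants. A workable choice orders monomials first by the total exponent $\sum_r i_r$ and then lexicographically by the exponents read from the roots of $\G$ upward: crossing-divisor vanishing deletes non-laminar terms outright, the self-intersection relations strictly drop $\sum_r i_r$ modulo strictly smaller corrections, and the coefficient reductions never raise it. The genuine technical obstacle is the precise self-intersection formula for $D_{I_r}$ in the presence of the nested divisors $D_{I_s}$, that is, the computation of $D_{I_r}|_{D_{I_r}}$ as a class on the bubble moduli together with its interaction with the descendant exceptional divisors. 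This is exactly the normal-bundle computation carried out for a fixed curve in \cite{T3}, and the relative version over $\CM_g$ follows by the same argument; it is also what forces the dimension bound to take the stated form.
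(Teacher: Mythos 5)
Your proposal follows essentially the same route as the paper: the three families of relations you invoke --- vanishing of crossing exceptional divisors, self-intersection/$\psi$-class relations on the genus-zero bubbles that bound the exponents $i_r$, and restriction relations forcing $a(v)$ into $R^*(\CC_g^{S})$ --- are exactly the paper's three displayed classes of relations, which it likewise attributes to the genus-zero $\psi$-class formula together with the trivial vanishing for non-intersecting boundary divisors, and then uses to rewrite arbitrary monomials in standard form. Your explicit complexity measure and termination argument flesh out a step the paper leaves implicit (deferring, as you do, to the blow-up computations of \cite{T1,T2,T3}), but the underlying strategy is the same.
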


\begin{proof}
The statement follows from a collection of relations.
These relations can be used to write elements in terms of standard monomials. 
They can be divided into two classes:

\begin{itemize}
\item
Let $I$ be a subset of the set $\{1, \dots , n\}$ with at least 3 elements.
For any $i,j \in I$ and $k \in \{1, \dots, n\} \setminus I$ we have the following relations:
\begin{equation}\tag{3}\label{exdiv}
(d_{i,j}+K_j) \cdot D_I=0, \D (d_{i,k}-d_{j,k}) \cdot D_I=0.
\end{equation}
\item
Let $1 \leq i_1 < \dots < i_{k+1} \leq n$ be a sequence of numbers.
To every such sequence associate a polynomial as follows:
$$P(t):= \prod_{i=2}^{i_1} \left(t+d_{1,i} \right) \cdot \prod_{s=1}^k \left(t+d_{1,i_s+1} \right).$$
Consider the subsets $I_r=\{1, \dots, i_{k+1}\}$ and $I_s=\{i_s+1, \dots, i_{s+1}\}$ of the set $\{1, \dots, n\}$ for $1 \leq s \leq k$.
We have the following relation:
\begin{equation}\tag{4}\label{vertex}
P \left(-\sum_{I_r \subseteq I} D_I \right) \cdot \prod_{s=1}^k D_{I_s}=0.
\end{equation}
\end{itemize}

Similar relations were found in previous works by studying intersection rings of blow-ups.
In \cite{T3} we have shown that all these relations can be directly proven on the moduli space as well. 
The first class of relations follow from the well-known formula for $\psi$-classes in genus zero.
The second class of relations are proven by showing that any monomial appearing in the expansion vanishes.
This follows from the vanishing of products of non-intersecting boundary divisors.
We now briefly explain how the statement follows from these relations.
In Definition \ref{order} we defined a preorder on the collection of all monomials. 
Let $v \neq 0$ be the smallest monomial which can not be written as a linear combination of standard monomials.
Assume that $v$ has the product form \eqref{vec}.
Denote by $\CG$ its associated graph.
From relations of the form \eqref{exdiv} we may assume that the index set $S$ satisfies the condition given in \eqref{set}.
By our assumption there is a vertex $I_r$ of the graph $\CG$ for which 
\begin{equation}\tag{5}\label{monomial}
i_r \geq j_r:=|I_r|-|\cup_{I_s \subset I_r} I_s|+\deg(I_r)-1.
\end{equation}
Let $k$ be the degree of the vertex $I_r$ in the graph $\CG$.
This corresponds to $k$ disjoint subsets $I_s$ of $I_r$ for $1 \leq s \leq k$.
From this data we get a relation of the form \eqref{vertex}.
In the expansion of this relation there is a unique term of the form $(-1)^{j_r}D_{I_r}^{j_r} \cdot  \prod_{s=1}^k D_{I_s}$.
By our assumption $i_r \geq j_r$.
After possibly multiplying this relation with an appropriate monomial we can write $v$ as a sum of monomials which are strictly less than $v$.
This contradicts our assumption about $v$ and proves the statement. 
\end{proof}

\subsection{The filtration of the tautological ring}

There is a natural way to define a decreasing filtration on the tautological ring of $\CM_{g,n}^{rt}$.
This filtration consists of $g-2+n$ steps. 
We prove a vanishing statement in terms of this filtration.
This vanishing is our key tool in relating tautological relations on $\CC_g^n$ and $\CM_{g,n}^{rt}$.
The definition of a similar filtration was formulated after a question by Looijenga in connection with our work \cite{T1} in genus one.

\begin{dfn}\label{fil}
Let $v$ be a standard monomial as given in \eqref{vec} and 
$J_1, \dots, J_s$ be roots of the associated graph.
The integer $p(v)$ is defined as follows:
$$p(v):=\deg a(v)+\sum_{r=1}^s |J_r|-s.$$
The subspace 
$\CF^p R^k(\CM_{g,n}^{rt})\subset R^k(\CM_{g,n}^{rt})$ is defined to be the 
$\QQ$-vector space generated by standard monomials $v$
of degree $k$ satisfying $p(v) \geq p$.
\end{dfn}


\begin{dfn}
Let $v,w$ be standard monomials.
We say that $w \ll v$ if every factor of $D(w)$ is less than every factor of $D(v)$.
We also make the convention that $w \ll v$ holds when $D(w)=1$.
\end{dfn}

\begin{prop}\label{triangle}
Let  $v \in R^d (\CM_{g,n}^{rt})$ and $w \in \CF^p R^*(\CM_{g,n}^{rt})$ be such that $v \ll w$.
If $p+d > g-2+n$ then the intersection product $v \cdot w$ is zero.
\end{prop}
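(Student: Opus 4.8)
The plan is to prove the stronger multiplicative statement that the filtration $\CF^\bullet$ is upper triangular with respect to the preorder $\ll$: if $w \in \CF^p R^*(\CM_{g,n}^{rt})$ and $v \in R^d(\CM_{g,n}^{rt})$ with $v \ll w$, then $v \cdot w \in \CF^{p+d} R^*(\CM_{g,n}^{rt})$. Granting this, the Proposition is immediate, since the hypothesis $p+d > g-2+n$ forces $p+d \geq g-1+n$, so $v \cdot w$ lies in $\CF^{g-1+n} R^*(\CM_{g,n}^{rt})$, which vanishes by the observation following Definition \ref{fil}. Thus the whole content is the multiplicativity of $\CF^\bullet$ along $\ll$.

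By bilinearity I would first reduce to the case where $v$ and $w$ are standard monomials, written $v = a(v) D(v)$ and $w = a(w) D(w)$ as in \eqref{vec}, with every exceptional factor of $D(v)$ strictly smaller than every exceptional factor of $D(w)$ in the preorder. Since smaller in the preorder means larger cardinality, every index set occurring in $D(v)$ either strictly contains or is disjoint from every index set occurring in $D(w)$; crossing sets contribute a vanishing product and may be discarded. Hence the index sets of $vw$ again form a laminar family, the sets coming from $v$ sit weakly above those coming from $w$ in the associated graph $\G$, and the roots of $\G$ are to be found among the smallest sets, namely among the roots of $w$ together with any factor of $D(v)$ that happens to be disjoint from all the others.

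The core step is to track the invariant $p(v)$ of Definition \ref{fil} as the product $vw$ is rewritten in standard form using the three classes of relations of the preceding Proposition, splitting the analysis by how each factor of $D(v)$ meets the roots of $w$. When a factor $D_I$ of $v$ is disjoint from every set already present it becomes a new root of $\G$; by the bound of Definition \ref{st} it occurs with exponent $i_I \leq |I|-2$, so it contributes $i_I$ to the degree $d$ but $|I|-1 > |I|-2 \geq i_I$ to $p$, and the $p$-gain already dominates. When instead $D_I$ lies above a root of $w$, the nesting violates the standardness bound, because the term $|I_r|-|\cup_{I_s \subset I_r} I_s|+\deg(I_r)-2$ is depressed by the presence of the smaller set; the offending power of $D_I$ must then be removed by the polynomial relations built from $P(t)$, which trade each such power of an exceptional divisor for diagonal classes $d_{i,j}$ lying in $R^*(\CC_g^n)$. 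Each trade lowers the divisor degree by one and raises $\deg a$ by one while leaving the roots untouched (the eliminated set being large), so it raises $p$ by one, converting the degree carried by $D(v)$ unit for unit into $p$. Adding the evident contribution $\deg a(v)$ of the base factor of $v$ yields $p(u) \geq p(w)+d$ for every standard monomial $u$ appearing in $vw$, which is the claim.

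The main obstacle is precisely this last bookkeeping: one must verify that no rewriting step can drop $p$ below the target $p(w)+d$. The delicate points are the interaction between the out-degree term $\deg(I_r)$ in the standardness bound and the $P(t)$-relations — raising an out-degree relaxes the bound and might a priori leave a power of $D_I$ in place without producing the compensating diagonal — and the need to treat the disjoint and nested regimes uniformly when several factors of $D(v)$ attach to a single root of $w$. I would arrange an induction on the preorder $\ll$, peeling off the largest index set of $D(v)$ first so that at each stage a single relation of the third class applies, its validity resting on the genus-zero formula for $\psi$-classes recalled in the proof of the generation statement; isolating and verifying the monotonicity of $p$ under that one relation is the key lemma.
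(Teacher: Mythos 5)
Your argument inverts the paper's root convention, and the error propagates into exactly the step that carries all the weight. In the definition of the graph $\G$, ``minimal vertex'' means minimal for the preorder introduced just after Definition \ref{fil}, in which $I<J$ when $|J|<|I|$: the roots $J_1,\dots,J_s$ are the inclusion-\emph{maximal} index sets, not the smallest ones. This is forced by the rest of the paper: the vanishing $\CF^{g-1+n}R^*(\CM_{g,n}^{rt})=0$ on which your whole strategy rests comes from standardness ($a(v)\in R^*(\CC_g^S)$, with $S$ in \eqref{set} read as one representative per root together with the complement of $\cup_r I_r$) combined with Looijenga's bound $\deg a(v)\le g-2+|S|$, and this yields $p(v)\le g-2+n$ only when $\sum_r |J_r|$ counts the maximal sets; likewise the paper's proof contracts each root to a point and lands in $\CC_g^k$ with $k=n-\sum_i|J_i|+s$. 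With the correct convention your treatment of the nested regime breaks down: when a factor $D_I$ of $v$ strictly contains roots of $w$, it is simply false that ``the nesting violates the standardness bound'' --- for instance $|I|=10$ with a single child $J$, $|J|=3$, allows any $i_I\le\min(8,6)=6$ --- so the $P(t)$-trade you invoke to convert divisor degree into $\deg a$ is not available there, and under your accounting (roots staying small) $p$ gains nothing against the added degree $i_I$, so your multiplicativity claim fails as argued. What actually rescues the nested case is the root change: $I$ becomes the new root, the root term jumps by $|I|-\sum_j|J_{r_j}|+t-1$, which strictly exceeds the standardness bound $|I|-\sum_j|J_{r_j}|+t-2$ on $i_I$. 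Moreover, even where trades are genuinely needed, your claim that each trade ``leaves the roots untouched'' is wrong: expanding $P(-\sum_{I_0\subseteq I}D_I)$ produces, besides diagonal classes, terms carrying divisors $D_I$ with $I\supsetneq I_0$, new and larger vertices that do alter the roots. These cases fall under the monotonicity lemma that you explicitly defer, so the proof is incomplete even on its own terms.

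For comparison, the paper avoids tracking $p$ through every rewriting step. It takes each standard monomial $a\cdot D$ occurring in $v\cdot w$, observes that the exponent bounds of Definition \ref{st} together with the hypothesis $p+d>g-2+n$ force $\deg(a)>g-2+n-\sum_i|J_i|+s+m$, rewrites $a\cdot D=b\cdot D$ with $b$ pulled back from $R^*(\CC_g^k)$, $k=n-\sum_i|J_i|+s$ (using the degree-preserving relations $(d_{i,j}+K_j)D_I=0$ and $(d_{i,k}-d_{j,k})D_I=0$), and kills $b$ by Looijenga's vanishing \cite{L} since $\deg b>g-2+k$. Note that your target vanishing $\CF^{g-1+n}=0$ is the very same Looijenga input, so your plan is not a different mechanism but a strictly stronger intermediate statement (multiplicativity of the filtration along $\ll$) whose crux you leave unproved; to salvage it you would need to fix the root convention and then verify the monotonicity of $p$ separately for each of the three families of relations --- precisely the work that the paper's single degree count on the final standard form accomplishes in one stroke.
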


\begin{proof}
We may assume that $v \cdot w:=a \cdot D$ is a standard monomial.
Denote by $J_1, \dots, J_s$ the collection of roots of its associated graph.
Consider the factorization of the $D$ part of $w$:
$$D(w):=\prod_{r=1}^m D_{I_r}^{i_r}.$$
The condition in \ref{st} gives a bound for the power $i_r$ of $D_{I_r}$
for each $1 \leq r \leq m$.
Combining these conditions together with the assumption $p+d>g-2+n$
gives that 
$$\deg(a)>g-2+n-\sum_{i=1}^s |J_i|+s+m.$$
We can write the product $a \cdot D$ as $b \cdot D$ 
for some $b$ which is the pull-back of a class from $R^*(\CC_g^k)$, where 
$$k=n-\sum_{i=1}^s |J_i|+s.$$
This means that the degree of $b$ is bigger than
$g-2+k+m$, which is at least $g-2+k$.
The element $b$ vanishes according to the vanishing \cite{L} proven by Looijenga.
\end{proof}

\begin{cor}\label{Fil}
The group $\CF^p R^*(\CM_{g,n}^{rt})$ vanishes when $p > g-2+n$.
\end{cor}

\begin{proof}
The statement follows from Proposition \ref{triangle} when $v=1$. 
\end{proof}

\begin{conj}\label{rt}
The space of relations in $R^*(\CM_{g,n}^{rt})$ is generated by relations in $R^*(\CC_g^n)$ together with the vanishing of:

\begin{itemize}
\item
All relations of the form $x \cdot D_I$, where $I \subseteq \{1, \dots, n\}$
is a subset with at least 3 elements and $x$ is of the form 
$K_i+d_{i,j}$ or $d_{i,k}-d_{j,k}$ for some $i,j \in I$ and $k \in \{1, \dots, n\} \setminus I$. 

\item
All products $D_I \cdot D_J$ for non-intersecting divisors $D_I$ and $D_J$.
\end{itemize}

\end{conj}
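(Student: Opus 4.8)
The plan is to reformulate the conjecture as a presentation statement and then split it into a formal \emph{generation} step and a genuinely hard \emph{independence} step. By Proposition \ref{FM} the ring $R^*(\CM_{g,n}^{rt})$ is generated over $R^*(\CC_g^n)$ by the exceptional divisors, so there is a surjection $\phi \colon \A := R^*(\CC_g^n)[x_I : |I| \ge 3] \too R^*(\CM_{g,n}^{rt})$ sending $x_I \mapsto D_I$. Let $\J \subset \A$ be the ideal generated by the two families listed in the statement: the classes $x \cdot x_I$ with $x = K_i + d_{i,j}$ or $x = d_{i,k}-d_{j,k}$, and the products $x_I x_J$ for non-intersecting $D_I, D_J$. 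Since these are genuine relations we have $\J \subseteq \ker \phi$, and the conjecture is exactly the reverse inclusion $\ker \phi \subseteq \J$.

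First I would carry out the generation step: every class is congruent modulo $\J$ to a combination of standard monomials. The three families of relations used in the proof that standard monomials span are available, and the first of them is precisely our family $x \cdot x_I$. It therefore remains to check that the self-intersection relation $\prod_{i \neq j \in I}(d_{i,j}-\sum_{I \subseteq J} D_J)=0$ and the polynomial relation $P(-\sum_{I_0 \subseteq I} D_I)\cdot \prod_i D_{I_i}=0$ already lie in $\J$. This is the content of the observation, noted in that proof, that both vanishings follow from the genus-zero formula for $\psi$-classes together with the trivial vanishing for non-intersecting boundary divisors: the $\psi$-class input is encoded by $(K_j+d_{i,j})D_I = 0$, while the non-intersecting input is our second family. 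Expanding these identities inside $\A/\J$ reduces everything to standard monomials, so $\A/\J$ is spanned by standard monomials in each degree.

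Because standard monomials additively generate $R^*(\CM_{g,n}^{rt})$, the induced map $\A/\J \xrightarrow{\sim} R^*(\CM_{g,n}^{rt})$ is an isomorphism if and only if the images of the standard monomials are linearly independent in the Chow ring; this is the reformulation as the ``independence of certain tautological classes'' announced in the introduction. To attack independence I would use the intersection pairing supplied by the one-dimensionality of $R^{g-2+n}$, combined with the filtration $\CF^\bullet$ of Definition \ref{fil}. Proposition \ref{triangle} shows $v \cdot w = 0$ whenever $v \ll w$ and $p(w)+\deg v > g-2+n$; this makes the pairing matrix of standard monomials block-triangular with respect to $\ll$ and the index $p$. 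On each diagonal block the pairing of $v = a(v)\cdot D(v)$ against its complement reduces, after evaluating the self-intersections of the exceptional divisors by the $\psi$-class and excess-intersection formulas and pushing forward, to an intersection pairing of $a(v) \in R^*(\CC_g^{S})$ on a smaller product $\CC_g^{k}$. Non-degeneracy of the full pairing would then follow from non-degeneracy of the pairings on the spaces $\CC_g^k$ together with the explicit blow-up contributions on the diagonal.

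The hard part, and the reason the statement is only a conjecture, is that this last reduction is entirely formal and in fact computes the pairing on the \emph{Gorenstein quotient}: it proves the analogue of the conjecture there. Upgrading to the actual Chow ring requires that $R^*(\CM_{g,n}^{rt})$ be Gorenstein, equivalently that no nonzero standard monomial lie in the radical of the pairing, and this is not known in general. I would therefore state the unconditional result only for Gorenstein quotients and record the Chow-level statement as a conjecture equivalent to the independence of standard monomials.
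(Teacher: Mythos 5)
Your proposal is correct in substance and follows essentially the same route as the paper: since the statement is a conjecture (equivalent, as you note, to the linear independence of standard monomials in Chow), the paper likewise does not prove it, but establishes exactly the Gorenstein-quotient analogue (Theorem \ref{RT}) via the block-triangular intersection matrix from Proposition \ref{triangle} and the reduction of diagonal blocks to pairings on the spaces $\CC_g^{S}$. Your generation step and the final Gorenstein caveat reproduce the paper's own decomposition of the problem, so there is nothing genuinely different to compare.
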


\subsection{Gorenstein quotients}

\begin{dfn}
The Gorenstein quotient $G^*(\CM_{g,n}^{rt})$, (resp. $G^*(\CC_g^n)$) is defined as the quotient of
the tautological ring $R^*(\CM_{g,n}^{rt})$, (resp. $R^*(\CC_g^n)$) 
modulo elements which define the zero map via the intersection pairings.
The filtration defined in \ref{fil} induces a filtration on $G^*(\CM_{g,n}^{rt})$ in a natural way.
\end{dfn}

We have identified $R^*(\CC_g^n)$ with a subalgebra of 
$R^*(\CM_{g,n}^{rt})$.
Under this identification we can view $G^*(\CC_g^n)$ as a subalgebra of 
$G^*(\CM_{g,n}^{rt})$.
This follows from the following lemma:

\begin{lem}
The natural inclusion of $R^*(\CC_g^n)$ into $R^*(\CM_{g,n}^{rt})$ induces an 
injection from $G^*(\CC_g^n)$ to $G^*(\CM_{g,n}^{rt})$.
\end{lem}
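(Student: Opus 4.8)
The plan is to realise the inclusion as the pull-back $p^*$ along the contraction map $p\colon \CM_{g,n}^{rt}\too \CC_g^n$ and to exploit that $p$ is proper and birational of degree one. Since $\CM_{g,n}^{rt}$ and $\CC_g^n$ have the same dimension $3g-3+n$ and $p$ is an isomorphism over the locus of curves without rational tails, we have $p_*p^*=\mathrm{id}$ on Chow groups with $\QQ$-coefficients, and $p_*$ carries $R^*(\CM_{g,n}^{rt})$ onto $R^*(\CC_g^n)$ by the very definition of the latter ring. Normalising the two top-degree isomorphisms $\int_{\CC}\colon R^{g-2+n}(\CC_g^n)\too\QQ$ and $\int_{rt}\colon R^{g-2+n}(\CM_{g,n}^{rt})\too\QQ$ by proper push-forward to a point, functoriality of the push-forward yields the compatibility $\int_{\CC}\circ\,p_*=\int_{rt}$ on top-degree classes. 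These four facts---$p^*$ a ring map, the projection formula $p_*(p^*\al\cdot w)=\al\cdot p_*w$, the identity $p_*p^*=\mathrm{id}$, and $\int_{\CC}\circ\,p_*=\int_{rt}$---are the only inputs I would use.

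First I would check that $p^*$ descends to the Gorenstein quotients, that is, that it carries the kernel of the pairing on $\CC_g^n$ into the kernel of the pairing on $\CM_{g,n}^{rt}$. Let $\al\in R^*(\CC_g^n)$ lie in the kernel, so $\int_{\CC}(\al\cdot\be)=0$ for every $\be\in R^*(\CC_g^n)$. For an arbitrary $w\in R^*(\CM_{g,n}^{rt})$ of complementary degree, the projection formula and the evaluation compatibility give
$$\int_{rt}(p^*\al\cdot w)=\int_{\CC}\big(p_*(p^*\al\cdot w)\big)=\int_{\CC}(\al\cdot p_*w).$$
Because $p_*w$ lies in $R^*(\CC_g^n)$, the right-hand side vanishes, so $p^*\al$ pairs to zero against every class and hence lies in the kernel on $\CM_{g,n}^{rt}$. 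This shows the induced map $G^*(\CC_g^n)\too G^*(\CM_{g,n}^{rt})$ is well defined.

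For injectivity I would argue contrapositively. Suppose the class of $\al\in R^d(\CC_g^n)$ is non-zero in $G^*(\CC_g^n)$; then there is some $\be\in R^{g-2+n-d}(\CC_g^n)$ with $\int_{\CC}(\al\cdot\be)\neq 0$. Testing $p^*\al$ against the class $w:=p^*\be$ and using that $p^*$ is a ring homomorphism together with $p_*p^*=\mathrm{id}$,
$$\int_{rt}(p^*\al\cdot p^*\be)=\int_{rt}\big(p^*(\al\cdot\be)\big)=\int_{\CC}\big(p_*p^*(\al\cdot\be)\big)=\int_{\CC}(\al\cdot\be)\neq 0.$$
Hence $p^*\al$ is non-zero in $G^*(\CM_{g,n}^{rt})$, which is exactly the required injectivity.

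The substance of the argument is concentrated in the normalisation step: everything works precisely because $p$ has degree one, which forces $p^*$ to be an isomorphism on the one-dimensional top groups and makes the two evaluation maps agree through $p_*$. The main point to verify carefully is therefore that the contraction is genuinely birational of degree one and that the chosen generators of the top tautological groups are matched by $p^*$; once this is pinned down, the projection formula does the rest, and no further information about the explicit generators $\ka_i$, $K_i$, $d_{i,j}$ or $D_I$ is needed.
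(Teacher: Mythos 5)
Your proof is correct in substance but takes a genuinely different route from the paper's. The paper works inside $R^*(\CM_{g,n}^{rt})$ with its standard-monomial machinery: to check well-definedness it writes a complementary class $y$ as a standard monomial $a(y)\cdot D(y)$, handles $D(y)=1$ by the hypothesis on $x$, and kills the case $D(y)\neq 1$ by the filtration vanishing of Proposition \ref{triangle} (so those products vanish identically in Chow, not merely under evaluation); injectivity is then deduced abstractly from the principle that Gorenstein algebras admit no non-trivial quotients. You instead push everything down along the contraction $p$: well-definedness follows from the projection formula together with the fact that $p_*$ carries $R^*(\CM_{g,n}^{rt})$ onto $R^*(\CC_g^n)$ (which is the very definition of the latter ring), and injectivity follows directly from $p_*p^*=\mathrm{id}$ applied to $p^*(\al\cdot\be)$. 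Your route is more formal and self-contained---it needs neither standard monomials nor Proposition \ref{triangle}, and it sidesteps the somewhat delicate ``no non-trivial quotients'' step, since you exhibit a non-vanishing pairing explicitly---while the paper's route keeps the argument inside the combinatorial framework that is reused in the proof of Theorem \ref{RT}.

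One step of your write-up is wrong as stated, though easily repaired: the evaluations $\int_{\CC}$ and $\int_{rt}$ cannot be ``normalised by proper push-forward to a point.'' Both $\CC_g^n$ and $\CM_{g,n}^{rt}$ are non-proper, and in any case the socle degree $g-2+n$ lies far below the dimension $3g-3+n$, so classes of that codimension are not zero-cycles and would push forward to zero even on a compactification; the isomorphisms $R^{g-2+n}\cong\QQ$ are abstract consequences of the one-dimensionality results cited in the paper. Fortunately the compatibility you need holds automatically up to a non-zero scalar, which is all your argument uses: $p_*$ is surjective in top degree by the definition of $R^*(\CC_g^n)$, hence an isomorphism between the two one-dimensional top groups, so $\int_{\CC}\circ p_*$ and $\int_{rt}$ are proportional by a non-zero constant (equivalently, your own remark that $p^*$ is an isomorphism on top groups gives the same conclusion via $p_*p^*=\mathrm{id}$). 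With that substitution, your proof is complete.
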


\begin{proof}
Let $x \in R^*(\CC_g^n)$ be an element of degree $k$ which pairs to zero with all elements of
degrees $g-2+n-k$ in $R^*(\CC_g^n)$.
We need to show that $x$ pairs to zero with all elements $y$ of degrees $g-2+n-k$ in $R^*(\CM_{g,n}^{rt})$.
We may assume that $y:=a(y) \cdot D(y)$ is standard. 
The product $x \cdot y$ vanishes if $D(y)=1$ by our assumption.
We prove the same statement when $D(y) \neq 1$.
If we define $v:=x \cdot a(y)$ and $w:=D(y)$ then it follows immediately that
$v$ and $w$ satisfy the conditions given in Proposition \ref{triangle}.
Therefore the product $x \cdot y$ vanishes.
\end{proof}

Recall that the tautological group of $\CC_g^n$ is one dimensional in degree $g-2+n$ and it vanishes in higher degrees.
Intersection product induces the following pairings among tautological groups:
$$R^k(\CC_g^n) \times R^{g-2+n-k}(\CC_g^n) \too \QQ, \D 0 \leq k \leq g-2+n.$$
By restricting to the Gorenstein quotient all these pairings become perfect.
We fix an involution $*$ on $G^*(\CC_g^n)$ which switches elements in degrees $k$ and $g-2+n-k$ for every $0 \leq k \leq g-2+n$.
This involution induces an involution on $G^*(\CM_{g,n}^{rt})$ as well.
It is defined as follows:

\begin{dfn}
Let $v \in G^k(\CM_{g,n}^{rt})$ be a standard monomial as in Definition \ref{st}.
The dual element $v^*$ is an element in $G^{g-2+n-k}(\CM_{g,n}^{rt})$ defined as:
$$v^*:=a(v)^* \cdot \prod_{r=1}^m D_{I_r}^{j_r},$$
where 
$$j_r:=|I_r|-|\cup_{I_s \subset I_r} I_s|+\deg(I_r)-1-i_r.$$

\end{dfn}

Notice that the defined involution gives a one to one correspondence between standard monomials in degrees $k$ and $g-2+n-k$.
There is a natural way to formulate a connection between $G^*(\CC_g^n)$ and $G^*(\CM_{g,n}^{rt})$ as in Conjecture \ref{rt}.

\begin{thm}\label{RT}
The analogue of Conjecture \ref{rt} holds for the Gorenstein quotients of tautological algebras.  
\end{thm}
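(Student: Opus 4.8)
The plan is to translate the statement into a rank computation for the intersection pairing and then verify that this rank is accounted for block by block by the subalgebras $G^*(\CC_g^S)$. The analogue of Conjecture \ref{rt} asserts that, after passing to Gorenstein quotients, the only relations beyond those already present in $R^*(\CC_g^n)$ are the two geometric families of Conjecture \ref{rt} (the vanishings $x \cdot D_I = 0$ and the products $D_I \cdot D_J = 0$ for non-intersecting divisors). Since these are exactly the relations used to put a monomial into standard form, the content of the theorem is that the standard monomials remain linearly independent in $G^*(\CM_{g,n}^{rt})$ precisely to the extent forced by the pairings on the various $G^*(\CC_g^S)$. I would therefore first reduce the theorem to the dimension identity
$$\dim_\QQ G^k(\CM_{g,n}^{rt}) = \sum_{D} \dim_\QQ G^{k-\deg D}(\CC_g^{S(D)}),$$
where $D$ ranges over the admissible products of exceptional divisors (equivalently over the graphs $\G$) and $S(D)$ is the set attached to $D$ in \eqref{set}.

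Second, I would establish that the pairing matrix, in the basis of standard monomials ordered so that larger $D$-parts come first, is block triangular with respect to the $D$-part. Given standard monomials $v$ of degree $k$ and $w$ of degree $g-2+n-k$ whose $D$-parts are not dual to one another under the involution $v \mapsto v^*$, I would rewrite $v \cdot w$ as a product $v' \cdot w'$ with $v' \ll w'$ to which Proposition \ref{triangle} applies, forcing $v \cdot w = 0$. This is the point of the filtration of Definition \ref{fil}: the exponent bounds of Definition \ref{st} together with the constraint $p + d > g-2+n$ push the $\CC_g$-part of the product into a degree where Looijenga's vanishing annihilates it. The outcome is that the only potentially nonzero entries pair a monomial $v = a(v)\cdot D(v)$ against monomials whose $D$-part equals $D(v^*) = \prod_r D_{I_r}^{j_r}$.

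Third comes the computation of the diagonal blocks, which I expect to be the main obstacle. Here the exponents $j_r = |I_r| - |\cup_{I_s \subset I_r} I_s| + \deg(I_r) - 1 - i_r$ defining the involution are engineered so that, for a fixed graph $\G$, the product $D(v)\cdot D(v^*) = \prod_r D_{I_r}^{i_r + j_r}$ pushes forward along the contraction $\CM_{g,n}^{rt} \too \CC_g^n$ to a nonzero multiple of the class of $\CC_g^{S(D)}$, with all cross terms of lower order removed by the relations $x \cdot D_I = 0$. Reducing $v \cdot w^*$ in this way identifies the diagonal block attached to a given $D$-part with the intersection pairing of $R^*(\CC_g^{S(D)})$ against itself, up to a normalizing constant. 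The delicate part is to check that this constant is nonzero and that no unexpected lower-order corrections survive; this is a normal-bundle bookkeeping on the relative Fulton–MacPherson blow-up, of the same nature as the genus-zero $\psi$-class computations recalled in the proof that standard monomials generate.

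Finally, I would assemble the pieces. The block-triangular shape from the second step, together with the identification of each diagonal block with the pairing on $R^*(\CC_g^{S(D)})$, shows that the rank of the pairing on $R^*(\CM_{g,n}^{rt})$ equals $\sum_D$ of the ranks of the pairings on $R^*(\CC_g^{S(D)})$, which is $\sum_D \dim_\QQ G^*(\CC_g^{S(D)})$. This is exactly the displayed dimension count, so the standard monomials descend to a basis of $G^*(\CM_{g,n}^{rt})$ adapted to the $D$-parts, and the kernel of the pairing is generated by the kernels of the pairings on the factors $G^*(\CC_g^{S(D)})$ together with the two geometric families. The injectivity lemma for $G^*(\CC_g^n) \into G^*(\CM_{g,n}^{rt})$ guarantees that these blocks embed without collapse, which completes the verification of the analogue of Conjecture \ref{rt}.
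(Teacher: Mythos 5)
Your proposal reconstructs essentially the same proof as the paper: order the standard monomials by their $D$-parts, use Proposition \ref{triangle} to get a block-triangular intersection matrix, identify the diagonal blocks with the intersection pairings of the $\CC_g^S$ up to a normalizing constant, and conclude from the invertibility of those blocks in the Gorenstein quotient that no further relations exist beyond the two geometric families. The one step you flag as the "delicate part" and leave unverified --- that the normalizing constant is nonzero and no lower-order corrections survive --- is exactly what the paper settles by an explicit computation: the pairing $v_1 \cdot v_2^*$ and the pairing $a(v_1)\cdot a(v_2)$ on $\CC_g^S$ differ by $(-1)^{\epsilon}(2g-2)^{n-|S|+1}$ (with $\epsilon$ determined by the graph), which is nonzero since $g>1$, so your outline completes to a correct proof.
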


\begin{proof}
Let $0 \leq k \leq g-2+n$ and consider the intersection pairing 
among tautological classes in degrees $k$ and $g-2+n-k$.
We consider the set of standard monomials of degree $k$ and their duals.
According to Proposition \ref{triangle} there is a block structure on the intersection matrix.
All blocks below the diagonal vanish.
More precisely, 
let $v_1, v_2$ be standard monomials in $R^k(\CM_{g,n}^{rt})$ satisfying the condition $D(v_1)<D(v_2)$.
Then the intersection product $v_1 \cdot v_2^*$ can be written as $v \cdot w$ for some $v$ and $w$
satisfying the condition given in Proposition \ref{triangle}.
It therefore vanishes.
From Lemma \ref{int_num} it follows that square blocks along the main diagonal are intersection matrices of the pairings of 
$\CC_g^{S}$ for various subsets $S$ of the set $\{1, \dots, n\}$ 
up to a constant depending only on the graphs associated to standard monomials:
Let $v_1$ and $v_2$ be two elements with the property $D(v_1)=D(v_2)$.
Denote by $\G$ their associated graph. 
The intersection number
$$v_1 \cdot v_2^* \in R^{g-2+n}(\CM_{g,n}^{rt}) \cong \QQ$$
and the number
$$a(v_1) \cdot a(v_2)^* \in R^{g-2+|S|}(\CC_g^{S}) \cong \QQ$$
differ by $(-1)^{\epsilon}(2g-2)^{n-|S|}$, where 
$$\epsilon=|\cup_{r=1}^m I_r|+\sum_{i \in V(\G)}\deg(i).$$
In the identification of the tautological group of $\CC_g^n$ in top degree with $\QQ$ we take the generator 
$\ka_{g-2} \cdot \prod_{i=1}^n K_i$.
We also identify $R^*(\CC_g^n)$ with a subring of $R^*(\CM_{g,n}^{rt})$ as usual.
These matrices are all invertible since we are working with Gorenstein quotients.
This shows that there is no more relation among standard monomials.
\end{proof}

\begin{cor}
The tautological ring of $\CM_{g,n}^{rt}$ is Gorenstein if and only if $R^*(\CC_g^m)$ is Gorenstein for all $m \leq n$.
\end{cor}

\begin{proof}
Let $0 \leq k \leq g-2+n$ be an integer. 
The proof of Theorem \ref{RT} shows that the intersection matrix $M$ of the pairing
$$R^k(\CM_{g,n}^{rt}) \times R^{g-2+n-k}(\CM_{g,n}^{rt}) \too \QQ$$ 
has a triangular structure. We know that square blocks along the main diagonal are intersection matrices of the pairings
$$R^l(\CC_g^m) \times R^{g-2+m-l}(\CC_g^m) \too \QQ,$$ 
where $m \leq n$ and $0 \leq l \leq g-2+m$ are integers.
The matrix $M$ is invertible if and only if all of its square blocks are invertible.
The statement follows since there is exactly one square block associated to the pairing
$$R^k(\CC_g^n) \times R^{g-2+n-k}(\CC_g^n) \too \QQ,$$ 
corresponding to all standard monomials $v$ of degree $k$ with $D(v)=1$.
\end{proof}

\begin{lem}\label{int_num}
Let $v$ be a monomial of degree $g-2+n$ as in \eqref{vec}. 
Consider its associated graph $\CG$ and the index set $S$ as in \eqref{set}. 
Assume that $a(v)$ is a tautological class of degree $g-2+|S|$ on $\CC_g^S$ and for every vertex $I_r$ of $\CG$ the following equality holds:
$$i_r=|I_r|-| \cup_{ I_s \subset I_r} I_s|+\deg (I_r)-1.$$
Then we have the following identity:
$$v=(-1)^{\epsilon} a(v) \cdot \prod_{r=1}^s \Delta_{J_r}, \D \epsilon=|\cup_{r=1}^m I_r|+\sum_{i \in V(\G)}\deg(i).$$
Here $\Delta_J$ is the diagonal class associated to the subset $J$ on $\CC_g^n$.
\end{lem}

\begin{proof}
The statement is proven by induction on the number of vertices of the graph $\CG$.
It is obviously true for the empty graph.
Let $J_1, \dots, J_s$ be roots of the graph $\CG$ and let $\al_r \in J_r$ be the smallest element. 
For simplicity we assume that $J_r$ is a maximal vertex of the graph $\CG$.
From \eqref{vertex} the following relation holds:
$$R_r:=\prod_{\al_r \neq j \in J_r} (d_{\al_r,j}-\sum_{J_r \subseteq J} D_J)=0.$$
Here we assume that $k=0$ and the index set $I_0$ is equal to $J_r$.
Consider the following vanishing:
$$a(v) \cdot  R_r \cdot \prod_{1 \leq t \neq r  \leq s} D_{J_t}=0.$$
We show that in the resulting expansion of this relation only two terms survive.
This follows from the following observation: 
Let $J_r \subseteq J$ and assume that the following product
\begin{equation}\tag{6}\label{roots}
a(v) \cdot  D_J \cdot \prod_{1 \leq t \neq r  \leq s} D_{J_t}
\end{equation}
is not zero. Notice that for each $t$ different from $r$ two subsets $J$ and $J_t$ have empty intersection.
We may assume that $a(v) \in R^*(\CC_g^T)$, where $T$ is the complement of the set 
$$T:=J \setminus  \{ \al_r \} \bigcup_{1 \leq r  \neq t \leq s}\left( J_t \setminus \{ \al_t \} \right).$$
From the definition of the powers $i_r$ we conclude that the following identity holds for every vertex $I_r$ of the graph $\CG$:
$$\sum_{I_k} i_k=|I_r|-1,$$
where the sum is taken over all vertices $I_k$ to which there is a directed path from $I_r$ in the graph.
This implies that the degree of $a(v)$ equals to $g-2+n-\sum_{r=1}^s |J_r|+s$, 
which is strictly larger than $g-2+|T|$ if $J_r \subset J$ is a proper subset.
Therefore the non-vanishing of \eqref{roots} forces the equality $J_r=J$.  
As a result we get the identity:
\begin{equation}\tag{7}\label{final}
a(v) \cdot D_{J_r}^{i_r} \cdot \prod_{1 \leq t \neq r  \leq s} D_{J_t}=(-1)^{|J_r|} a(v) \cdot \prod_{\al_r \neq j \in J_r} d_{\al_r,j} \cdot \prod_{1 \leq t \neq r  \leq s} D_{J_t}.
\end{equation}
Using this relation we are able to reduce the question to a graph with smaller number of vertices.
When the vertex $J_r$ is not a maximal vertex we proceed along the same lines.
The difference is that for the general case the relation \eqref{vertex} is slightly more complicated.
\end{proof}

Our results give a different proof of the statements in Section \ref{socle} using the results of Faber and Looijenga:

\begin{prop}
The tautological group of $\CM_{g,n}^{rt}$ is one dimensional in degree $g-2+n$.
It vanishes in higher degrees.
\end{prop}

\begin{proof}
Let $v:=a(v) \cdot D(v)$ be a standard monomial of degree at least $g-2+n$ as in \eqref{vec}.
Let $\CG$ be the graph associated to the monomial $D(v)$ and denote its roots by $J_1, \dots, J_s$.
The index set $S$ is defined as in \eqref{set}.
From the inequalities:
$$g-2+n \leq \deg a(v)+ \deg D(v) \leq g-2+n-\sum_{r=1}^s |J_r|+s + \deg D(v),$$
it follows that $$\deg D(v) \geq \sum_{r=1}^s |J_r|-s.$$
On the other hand the conditions given in Definition \ref{st} imply the strict inequality
$$\deg D(v) < \sum_{r=1}^s |J_r|-s,$$
and therefore the graph $\CG$ should be empty.
This means that $v$ is simply the pull-back of a tautological class from the moduli space $\CC_g^n$.
The statements follow immediately from the results in \cite{F2,L} on the tautological groups of $\CC_g^n$. 
\end{proof}

\section{Final remarks}

The result of Petersen \cite{P} confirms Conjecture \ref{rt}.
His approach is based on a study of Fulton-MacPherson spaces for families and proves analogue results in more general settings.
In \cite{Pi} Pixton introduces a large collection of tautological relations on $\MM_{g,n}$.
He conjectures that these relations give a complete set of generators among tautological classes.
We can restrict Pixton's relation on $\CM_{g,n}^{rt}$ and consider their push-forwards to $\CC_g^n$. 

\begin{Q}
Can one relate Pixton's relations on $\CM_{g,n}^{rt}$ in terms of his relations on 
$\CC_g^n$ as described in Conjecture \ref{rt}?
\end{Q}

\bibliographystyle{amsplain}
\bibliography{mybibliography}
\end{document}